\documentclass[12pt,a4paper]{amsart}
\usepackage{amsfonts}
\usepackage{amsthm}
\usepackage{amsmath}
\usepackage{amscd}
\usepackage[latin2]{inputenc}
\usepackage{t1enc}
\usepackage[mathscr]{eucal}
\usepackage{indentfirst}
\usepackage{graphicx}
\usepackage{graphics}
\usepackage{pict2e}
\usepackage{epic}
\numberwithin{equation}{section}
\usepackage[margin=2.9cm]{geometry}
\usepackage{epstopdf} 
\usepackage{tikz}
\usepackage{tikz-cd}
\usepackage{amssymb}

\setlength{\oddsidemargin}{.6in}
\setlength{\textwidth}{5.8in}
\setlength{\topmargin}{.1in}
\setlength{\headheight}{0in}
\setlength{\headsep}{0.5in}
\setlength{\textheight}{8.3in}
\setlength{\footskip}{.5in}

\newtheorem{theorem}{Theorem}
\newtheorem{lemma}[theorem]{Lemma}

\newtheorem{proposition}[theorem]{Proposition}

\theoremstyle{definition}

\theoremstyle{remark}

\theoremstyle{remark}

\begin{document}

\title[A Geometric Interpretation of the Normal Closure of $B_n$ in $B_n(T)$]{A Geometric Interpretation of the Normal Closure of the Braid Group $B_n$ in the braid group of the torus $B_n(T)$}

\author{Liming Pang}

\address{New York University \\ Courant Institute of Mathematics, Department of Mathematics \\ 251 Mercer Street\\
New York, N.Y. 10012 \\
U.S.A. } 

\email{liming@cims.nyu.edu}

\begin{abstract} It had been proved in \cite{on braid groups} and \cite{goldberg} that the normal closure of the pure braid group $P_n(D)$ in the pure braid group of the torus $P_n(T)$ is the commutator subgroup $[P_n(T),P_n(T)]$. In this paper we are going to study the case of full braid groups: i.e., the normal closure of $B_n(D)$ in $B_n(T)$, which turns out to have an interesting geometric description.
\end{abstract}

\maketitle

\section{Introduction} 

Let $D$ denote a $2$-dimensional disk, $T$ denote a $2$-dimensional torus and $S$ denote a $2$-dimensional surface. Let $C_n(S)$ denote the unordered configuration space of $n$ points on $S$, and $C'_n(S)$ is the ordered configuration space of $n$ points on $S$. $B_n(S)=\pi_1C_n(S)$ is the full braid group of $n$ strings on $S$ and $P_n(S)=\pi_1C'_n(S)$ is the pure braid group of $n$ strings on $S$. For the detailed definitions of the above concepts, the reader may refer to Section \ref{definition}.

In \cite{on braid groups} it was shown that the abelianization of the pure braid group of $P_n(T)$, $P_n(T)/[P_n(T),P_n(T)]$, is isomorphic to the product of $n$ copies of $\pi_1(T)$, where $[P_n(T),P_n(T)]$ is the commutator subgroup of $P_n(T)$. More specifically, given a surface $S$, consider the inclusion of the ordered configuration space of $S$, $C_n'(S)$ into the direct product of $n$ copies of $S$, $i: C_n'(S)\hookrightarrow S\times ...\times S$. It induces a homomorphism on fundamental groups:
\[
i_*: P_n(S)=\pi_1C_n'(S)\longrightarrow \pi_1(S\times...\times S)\cong \pi_1(S)\times...\times \pi_1(S).
\]
\cite{on braid groups} tells us that when $S$ is the torus $T$, $\ker(i_*)=[P_n(T),P_n(T)]$, the commutator subgroup of $P_n(T)$. 

Later in \cite{goldberg}, it was proved that $\ker{i_*}=P_n(D)^{P_n(T)}$, the normal closure of $P_n(D)$ in $P_n(T)$, i.e., $\ker(i_*)$ is the smallest normal subgroup in $P_n(T)$ containing $P_n(D)$. Combining the above results, we see: 
\[
P_n(D)^{P_n(T)}=[P_n(T),P_n(T)].
\]

The main theorem of this paper is to generalize the result above to the case of full braid groups, i.e., to study the normal closure of $B_n(D)$ in $B_n(T)$.

In Section \ref{core}, we will define a subset of $C_n(T)$, denoted by $C_n^e(T)$, and the goal of this paper is to show the following:

\begin{theorem} ($\textbf{Main Theorem}$)
$\pi_1C_n^e (T)$ is the normal closure of $B_n(D)$ in $B_n(T)$. 
\end{theorem}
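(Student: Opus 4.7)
The plan is to identify both $\pi_1 C_n^e(T)$ and the normal closure $N:=B_n(D)^{B_n(T)}$ with the kernel of a natural total--winding homomorphism $h\colon B_n(T)\to H_1(T)\cong\mathbb{Z}^2$, which sends a braid to the sum in $H_1(T)$ of the homology classes of its $n$ strands. The sum is well defined even though the strands are permuted, because $H_1(T)$ is abelian. This $h$ is surjective, vanishes on $B_n(D)$ (disk braids have null--homotopic strands), and has kernel normal in $B_n(T)$, so $N\subseteq\ker h$ is immediate.

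For the reverse inclusion $\ker h\subseteq N$, I would compute the quotient $Q:=B_n(T)/N$ and show it is at most $\mathbb{Z}^2$. Using the short exact sequence $1\to P_n(T)\to B_n(T)\to S_n\to 1$ together with the containments $S_n\subset B_n(D)\subset N$, we get $Q=P_n(T)/(N\cap P_n(T))$. By the results of \cite{on braid groups} and \cite{goldberg} stated in the introduction, $P_n(D)^{P_n(T)}=\ker(i_*)$ is contained in $N\cap P_n(T)$, so $Q$ is a quotient of $P_n(T)/\ker(i_*)\cong\pi_1(T)^n=\mathbb{Z}^{2n}$. On the other hand, for any lift $\sigma\in B_n(D)$ of a nontrivial element of $S_n$ and any $\alpha\in P_n(T)$, the commutator $\sigma\alpha\sigma^{-1}\alpha^{-1}$ lies in $N\cap P_n(T)$; under the abelianization map $P_n(T)\to\mathbb{Z}^{2n}$, these commutators generate the ``sum--zero'' subgroup $\{(y_1,\dots,y_n)\in(\mathbb{Z}^2)^n:\sum_i y_i=0\}$. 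Dividing $\mathbb{Z}^{2n}$ by this subgroup yields $\mathbb{Z}^2$, so $Q$ is a quotient of $\mathbb{Z}^2$. Combined with the existing surjection $B_n(T)\twoheadrightarrow\mathbb{Z}^2$ induced by $h$, this forces $Q\cong\mathbb{Z}^2$ and hence $N=\ker h$.

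The remaining task, which is where most of the work will live, is to match $\pi_1 C_n^e(T)$ with the same kernel $\ker h$. From the title's emphasis on geometric interpretation and the analogy with the pure case, I expect $C_n^e(T)$ to be the locus in $C_n(T)$ cut out by a ``total position'' or ``total winding'' constraint, and that the inclusion $C_n^e(T)\hookrightarrow C_n(T)$ fits into a (quasi--)fibration over a space with fundamental group $\mathbb{Z}^2$ whose long exact sequence degenerates to $1\to\pi_1 C_n^e(T)\to B_n(T)\to\mathbb{Z}^2\to 1$, with the connecting map equal to $h$. The main obstacle I anticipate is verifying that this geometric setup is actually a fibration with connected fiber: one has to show that any two configurations in $C_n^e(T)$ can be joined by a path inside $C_n^e(T)$, and that lifts of loops in the base can be produced in a controlled way, both of which require genuine geometric input about how points in a constrained unordered configuration on $T$ may be moved without collision.
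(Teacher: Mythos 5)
Your group-theoretic argument that the normal closure $N=B_n(D)^{B_n(T)}$ equals $\ker h$ is essentially correct and takes a genuinely different route from the paper's. The paper first proves that $\pi_1 C_n^e(T)=(B_n(T))'B_n(D)$ and then shows $(B_n(T))'\subseteq N$ by splitting each of $\alpha,\beta\in B_n(T)$ as a pure braid times a disk braid and invoking Goldberg's theorem $(P_n(T))'=P_n(D)^{P_n(T)}$; you instead compute the quotient $Q=B_n(T)/N$ directly, squeezing it between $\mathbb{Z}^{2n}/(\text{sum-zero subgroup})\cong\mathbb{Z}^2$ from above and $\mathbb{Z}^2$ from below and using that $\mathbb{Z}^2$ is Hopfian. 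Both arguments rest on the same two inputs, namely the decomposition $B_n(T)=P_n(T)\cdot B_n(D)$ and the Birman--Goldberg identification of $\ker i_*$ with $P_n(D)^{P_n(T)}=(P_n(T))'$, but your observation that the commutators $\sigma\alpha\sigma^{-1}\alpha^{-1}$ sweep out the sum-zero sublattice of $(\mathbb{Z}^2)^n$ is correct and makes the appearance of $\mathbb{Z}^2$ quite transparent. One small slip: $S_n$ is not a subgroup of $B_n(D)$ (braid groups are torsion-free); what you actually need, and all you use, is that $B_n(D)\to S_n$ is onto, which already gives $B_n(T)=P_n(T)\cdot N$ and hence $Q\cong P_n(T)/(N\cap P_n(T))$.

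The genuine gap is the step you defer: you never prove $\pi_1 C_n^e(T)=\ker h$, and since the theorem is a statement about $\pi_1 C_n^e(T)$, the proof is incomplete without it. The paper closes this gap with two short lemmas. First, the multiplication map $m:C_n(T)\to T$, $\{x_1,\dots,x_n\}\mapsto\prod_i x_i$, is an honest fiber bundle with fiber $C_n^e(T)$: local triviality over a neighborhood $U$ of $e$ is exhibited explicitly by translating every point of a configuration by $x/n$, which uses the group structure of $T$ in an essential way (this is exactly the ``genuine geometric input'' you anticipate needing, and it is where the torus, as opposed to a general surface, matters). The homotopy exact sequence of this bundle, together with $\pi_2(T)=0$, the connectedness of $C_n(T)$, and the surjectivity of $m_*$ --- which also forces the fiber to be connected, answering the worry you raise --- yields $1\to\pi_1 C_n^e(T)\to B_n(T)\xrightarrow{m_*}\pi_1(T)\to 1$. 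Second, $m_*$ is identified with your total-winding map $h$ by evaluating it on the generators $a,b,\sigma_i$ of Bellingeri's presentation. If you supply these two verifications, your proof goes through.
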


\section{Acknowledgment}

This result first appears in the author's doctoral dissertation. 

I would like to thank Prof. Sylvain Cappell and Prof. Edward Miller for their valuable discussions and suggestions. 

\section{Definitions and Preliminary}\label{definition}

The geometric braid group $B_n$ is defined as follows: $\{x_1,...,x_n\}$ is a set of $n$ interior points in the closed unit disk $D$ in the plane $\mathbb{R}^2$. Consider the set of $n$ disjoint continuous paths $\{\alpha_1(t),...,\alpha_n(t)\}$ in $D\times [0,1]$ with $\alpha_i(0)=(x_i,0)$, $\{\alpha_1(1),...,\alpha_n(1)\}=\{x_1,...,x_n\}\times \{1\}$, $\alpha_i(t)$ is in the interior of the disk $D\times\{t\}$ for any $t\in [0,1]$. We quotient this set by homotopies of the strings relative to their endpoints. The quotient set can be made into a group by defining the operation to be concatenation of two braids, i.e., identify $D\times \{1\}$ of the first braid with $D \times \{0\}$ of the second braid to obtain a new braid. The product $D\times I$ is embedded in $\mathbb{R}^2\times \mathbb{R}=\mathbb{R}^3$, and the $n$ strings can then be visualized in this manner.\\

$B_n$ has the classical Artin's presentation \cite{artin}:
\[
<\sigma_1,...,\sigma_{n-1}| \sigma_1\sigma_j=\sigma_j\sigma_i (|i-j|>1),\sigma_i\sigma_j\sigma_i=\sigma_j\sigma_i\sigma_j(|i-j|=1)>,
\]
and geometrically $\sigma_i$ corresponds to a counterclockwise half twist of the $i$-th and $(i+1)$-th strings.\\

The concept of configuration spaces, introduced by Fadell and Neuwirth in \cite{conf space}, brings a topological description of the braid group as a fundamental group, and a possible generalization of the concept of braid group to any surface. \\  

Let $S$ be a 2 dimensional surface. The $\textbf{ordered configuration space}$ of $n$ points on $S$ is defined to be $C'_n(S)=\{(x_1,...,x_n)\in S\times ...\times S| x_i \neq x_j \text{ for } i\neq j\}$. The symmetric group $S_n$ acts on $C'_n(S)$ by permuting the $n$ coordinates. The orbit space of this action is the $\textbf{unordered configuration space}$ of $n$ points on $S$: $C_n(S)=C'_n(S)/{S_n}=\{\{x_1,...,x_n\}\in 2^S| x_i \neq x_j \text{ for } i\neq j\}$. In other words, $C_n(S)$ consists of subsets of $S$ of cardinality $n$. 

There is a natural covering space structure corresponding to the above action:
\[
C_n'(S) \longrightarrow C_n(S)
\]
\[
(x_1,...,x_n)\mapsto \{x_1,...,x_n\}.
\]
The covering map is to forget the ordering of the elements. This is a regular covering with the covering transformation to be the symmetric group $S_n$. \\

After choosing a base point $x^0=\{x_1^0,...,x_n^0\}\in C_n(S)$, we can define the $\textbf{braid group}$ of n strands on $S$, $B_n(S)$, to be the fundamental group of $C_n(S)$: $B_n(S)=\pi_1(C_n(S,),x^0)$. Since different choices of base points will lead to isomorphic fundamental groups, we will omit the base point in the following discussions, unless it is necessary to stress on the base point. Similarly, the $\textbf{pure braid group}$ of $n$ strands on $S$, $P_n(S)$, is defined to be the fundamental group of $C'_n(S)$: $P_n(S)=\pi_1(C'_n(S))$. When $S=D$, a disk, we get the classical braid groups $B_n=B_n(D)=\pi_1(C_n(D))$ and $P_n=P_n(D)=\pi_1(C'_n(D))$.

The covering space mentioned above leads to the following short exact sequence:

\[
1 \longrightarrow P_n(S) \longrightarrow B_n(S) \longrightarrow S_n \longrightarrow 1.
\] 

If $f:S_1\longrightarrow S_2$ is an injective continuous map between two surfaces, there is an induced map on their configuration spaces, which we denote by $\hat{f}$: 

\[
\hat{f}: C_n(S_1) \longrightarrow C_n(S_2)
\]
\[
\{x_1,...,x_n\} \mapsto \{f(x_1),...,f(x_n)\}.
\]
The maps on configurations spaces induce homomorphisms on the braid groups, by the fundamental group construction: 
\[
\hat{f}_*: B_n(S_1) \longrightarrow B_n(S_2).
\]
 
The following Proposition proved in \cite{rolfsen} gives a criterion for when $f_*$ is injective: 

\begin{proposition} (Paris and Rolfsen)
Let $f:S_1\hookrightarrow S_2$ be an inclusion map. Let $S_1$ be different from the sphere or the projective plane, and let $S_2$ be such that none of the connected components of $\overline{S_2\setminus S_1}$ is a disk, then the homomorphism $\hat{f}_*: B_n(S_1)\longrightarrow B_n(S_2)$ is injective.
\end{proposition}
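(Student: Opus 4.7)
The plan is to reduce first to pure braid groups via the covering sequence, then argue by induction on $n$ using the Fadell--Neuwirth fibration, with the base case $n=1$ being a classical $\pi_1$-injectivity statement for surface inclusions.

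For the reduction, the covering short exact sequence
\[
1 \longrightarrow P_n(S_i) \longrightarrow B_n(S_i) \longrightarrow S_n \longrightarrow 1
\]
is natural in $S_i$, with the induced map on $S_n$ being the identity. A short diagram chase (or the five lemma) shows that if $P_n(S_1)\to P_n(S_2)$ is injective, then so is $B_n(S_1)\to B_n(S_2)$; thus it suffices to work with pure braid groups.

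For the base case $n=1$, we must show $\pi_1(S_1)\hookrightarrow \pi_1(S_2)$. If $\pi_1(S_1)$ is trivial this is immediate. Otherwise each boundary circle $c$ of $S_1$ is essential in $S_1$ (as $S_1$ is not a disk, sphere, or projective plane), and the hypothesis that no component of $\overline{S_2\setminus S_1}$ is a disk ensures $c$ is also essential in its adjacent complementary component. A Seifert--van Kampen decomposition of $S_2$ along $\partial S_1$ then produces a graph of groups whose edge inclusions are $\pi_1$-injective on both sides, and standard Bass--Serre theory gives that each vertex group embeds in the total $\pi_1$, yielding the desired injection.

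For the inductive step, choose base configurations so that the ``forgotten'' points $y_1,\dots,y_{n-1}$ lie in the interior of $S_1$. The Fadell--Neuwirth fibrations
\[
S_i\setminus\{y_1,\dots,y_{n-1}\} \hookrightarrow C'_n(S_i) \longrightarrow C'_{n-1}(S_i)
\]
fit into a map of fibrations; since $C'_{n-1}(S_i)$ is aspherical for the surfaces at hand (a standard fact once $S^2$ and $\mathbb{RP}^2$ are excluded, which the hypotheses force), the long exact sequences collapse into short exact sequences relating the $\pi_1$ of the fiber, $P_n(S_i)$, and $P_{n-1}(S_i)$. Injectivity on $P_{n-1}$ follows by induction, and the base case applied to the punctured pair $\bigl(S_1\setminus\{y_j\},\,S_2\setminus\{y_j\}\bigr)$ gives injectivity on the fiber; a diagram chase completes the step. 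The main obstacle is verifying that the hypotheses persist under puncturing: since the $y_j$ lie in the interior of $S_1$, the closed complement $\overline{S_2\setminus S_1}$ is unchanged so no disk component appears, and a punctured $S_1$ is certainly not the compact $S^2$ or $\mathbb{RP}^2$, so the inductive step genuinely invokes the same proposition on the punctured surfaces.
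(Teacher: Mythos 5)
The paper itself offers no proof of this proposition --- it is quoted from Paris--Rolfsen \cite{rolfsen} (with only the remark that the disk case is due to Goldberg) --- so there is no internal argument to compare against. Your outline reproduces the standard strategy of that source: reduce $B_n$ to $P_n$ via the naturality of the permutation short exact sequences and a diagram chase, establish $n=1$ by observing that the hypothesis on $\overline{S_2\setminus S_1}$ makes $S_1$ incompressible, so that a graph-of-groups decomposition of $S_2$ along $\partial S_1$ embeds $\pi_1(S_1)$ into $\pi_1(S_2)$, and then induct using the Fadell--Neuwirth fibrations, noting that puncturing at points interior to $S_1$ leaves $\overline{S_2\setminus S_1}$ unchanged. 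Those steps are all sound.

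The one genuine gap is the parenthetical claim that the stated hypotheses force $S_2\neq S^2,\mathbb{RP}^2$, which you need for the asphericity of $C'_{n-1}(S_2)$ and hence for the short exactness of the Fadell--Neuwirth sequence on $\pi_1$. An Euler characteristic count does rule out $S_2=S^2$ (a proper subsurface of $S^2$ with no disk complementary component would require $\chi(S_1)\geq 2$, impossible for a compact surface with boundary), but it does not rule out $S_2=\mathbb{RP}^2$: for $S_1=D\subset\mathbb{RP}^2$ the closed complement is a M\"obius band, not a disk, and $D$ is neither the sphere nor the projective plane, so every stated hypothesis holds. There the conclusion actually fails: $B_2(D)\cong\mathbb{Z}$ maps to $B_2(\mathbb{RP}^2)$, which Van Buskirk showed is a finite group of order $16$, so the map cannot be injective. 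Hence your proof cannot be completed for the statement exactly as transcribed; one must add the hypothesis $S_2\neq S^2,\mathbb{RP}^2$ (implicit in Paris--Rolfsen, who treat the sphere and projective plane separately), after which your argument goes through. This is harmless for the application in this paper, where $S_2=T$, but the step as written is a real hole rather than a routine verification.
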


As remarked in \cite{rolfsen}, a special case of this proposition when $S_1$ is a disk is proved in \cite{goldberg}.

By this Proposition, we can see that $B_n(D)$ injects into $B_n(T)$ canonically.

\section{The Subspace $C_n^e(T)$ And Its Properties}\label{core}

The torus $T\cong S^1\times S^1$ is a topological abelian group, and we denote the identity element by $e\in T$. 

Define a subspace of $C_n(T)$ by:
\[
C^e_n(T)=\{\{x_1,...x_n\}\in C_n(T): \prod_{i=1}^{n}x_i=e\},
\]
that is, an element in this subspace is a set of $n$ distinct points on torus whose product is $e$. Since the torus is abelian, this subspace is well-defined. 

Similarly, for the ordered case, we can also define a subspace of $C'_n(T)$ by:
\[
C'^e_n(T)=\{(x_1,...x_n)\in C_n(T): \prod_{i=1}^{n}x_i=e\}.
\]

The groups $\pi_1(C^e_n(T))$ contains some groups that we are familiar with as subgroups:

\begin{lemma}\label{subgroup}
$B_n(D)$ is a subgroup of $\pi_1(C^e_n(T))$, and $P_n(D)$ is a subgroup of $\pi_1({C'}_n^e(T))$. 
\end{lemma}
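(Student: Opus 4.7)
The plan is to realize the inclusion $B_n(D) \hookrightarrow B_n(T)$ as factoring, up to based homotopy, through the subspace $C_n^e(T)$; once this is established, injectivity of $B_n(D) \hookrightarrow \pi_1(C_n^e(T))$ will follow immediately from the Paris--Rolfsen proposition recalled above.

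First I would choose an embedding $D \hookrightarrow T$ small enough to lift to an embedding $\tilde{D} \hookrightarrow \mathbb{R}^2$ through the universal cover $\pi: \mathbb{R}^2 \to T$. Via this lift one gets a continuous ``centroid'' map $c: C_n(D) \to T$, defined by $c(\{x_1, \ldots, x_n\}) := \pi((\tilde{x}_1 + \cdots + \tilde{x}_n)/n)$, which satisfies $c(\{x_i\})^n = \prod_i x_i$ in $T$. Setting $r(\{x_i\}) := \{x_1 c^{-1}, \ldots, x_n c^{-1}\}$ then yields a continuous map $r: C_n(D) \to C_n^e(T)$, since translation by $c$ preserves distinctness and $\prod_i (x_i c^{-1}) = (\prod_i x_i) c^{-n} = e$.

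Next I would verify that $r$ is based-homotopic to the inclusion. Choose the basepoint $x^0 = \{q_1, \ldots, q_n\} \in C_n(D)$ so that $\tilde{q}_1 + \cdots + \tilde{q}_n = 0$ in $\mathbb{R}^2$, which is straightforward: pick $q_1, \ldots, q_{n-1}$ freely in $\tilde{D}$ and take $\tilde{q}_n := -(\tilde{q}_1 + \cdots + \tilde{q}_{n-1})$, enlarging $\tilde{D}$ if needed. Then setting $c_s(\{x_i\}) := \pi(s (\tilde{x}_1 + \cdots + \tilde{x}_n)/n)$ for $s \in [0,1]$, the family $r_s(\{x_i\}) := \{x_1 c_s^{-1}, \ldots, x_n c_s^{-1}\}$ defines a continuous homotopy $C_n(D) \times [0,1] \to C_n(T)$ from the inclusion ($s=0$) to $r$ ($s=1$), fixing $x^0$ throughout because $c_s(x^0) = e$ for all $s$. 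Passing to $\pi_1$ gives the factorization $i_* = j_* \circ r_*$, where $i$ and $j$ are the inclusions of $C_n(D)$ and $C_n^e(T)$ into $C_n(T)$; since $i_*$ is injective by Paris--Rolfsen, so is $r_*: B_n(D) \to \pi_1(C_n^e(T))$.

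The pure braid statement $P_n(D) \subset \pi_1({C'}_n^e(T))$ is proved by the identical construction applied to ordered configurations throughout. The only subtle point is ensuring that the centroid $c$ and the homotopy $r_s$ are globally well-defined, continuous, and basepoint-preserving, which is exactly why I work with the lift $\tilde{D} \subset \mathbb{R}^2$ and pick a basepoint with zero centroid; once this setup is in place, the rest of the argument is automatic.
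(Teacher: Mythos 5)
Your proof is correct, but it takes a genuinely different route from the paper's. The paper argues generator by generator: it places the basepoint in $C_n^e(T)$ and represents each Artin generator $\sigma_i$ by an explicit half-twist in which the $i$-th and $(i+1)$-th points move in opposite directions, so that the product of the $n$ points is constantly $e$; since $\pi_1(C_n^e(T))$ (viewed in $B_n(T)$) is a subgroup containing all the $\sigma_i$, it contains $B_n(D)$, and the pure braid case follows by intersecting with $P_n(T)$ exactly as you would expect. You instead normalize globally: the centroid map $c$ and the translation $r(\{x_i\})=\{x_ic^{-1}\}$ deform the \emph{entire} inclusion $C_n(D)\hookrightarrow C_n(T)$, rel a zero-centroid basepoint, into $C_n^e(T)$, and then functoriality of $\pi_1$ does the rest. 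Your version buys several things the paper's does not: it avoids any appeal to the Artin presentation or to drawing explicit loops, it treats all of $B_n(D)$ at once, and the factorization $i_*=j_*\circ r_*$ together with Paris--Rolfsen gives injectivity of $B_n(D)\to\pi_1(C_n^e(T))$ itself, without needing to know (as the paper only establishes in the subsequent fibration lemma) that $\pi_1(C_n^e(T))\to B_n(T)$ is injective. The paper's version is shorter and more pictorial. One cosmetic point: rather than ``enlarging $\tilde D$'' to accommodate $\tilde q_n=-(\tilde q_1+\cdots+\tilde q_{n-1})$ (which could in principle push the lift past what embeds in $T$), it is cleaner to take $\tilde D$ a small disk centered at the origin and choose the $n$ basepoints symmetrically about $0$; this does not affect the validity of your argument.
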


\begin{proof}
We choose the base point $\{x_1^0,...,x_n^0\}$ of $C_n(T)$ to be inside $C_n^e(T)$, so $\prod_{i=1}^n x_i^0=e$. We only need to illustrate that the generators of $B_n(D)$, $\sigma_i$, are in $\pi_1(C^e_n(T))$. We can choose a representative of $\sigma_i\in B_n(T)$ by letting the $i$-th and $(i+1)$-th base points do a half twist in a way that at each time they are moving in the opposite way, which makes that at any time, the product of the $n$ points will remain to be $e$, so we obtain a loop in $C_n^e(T)$, hence it represents an element in $\pi_1(C^e_n(T))$. We conclude $B_n(D)\subseteq \pi_1(C^e_n(T))$. 

$\pi_1({C'}_n^e(T))=\pi_1(C^e_n(T))\cap P_n(T)$, so $P_n(D)=B_n(D)\cap P_n(T)\subseteq \pi_1(C^e_n(T)) \cap P_n(T)=\pi_1({C'}_n^e(T))$.
\end{proof}

The next lemma shows we actually get a fiber bundle from our construction: 

\begin{lemma}\label{bundle}
The map
\[
m:C_n(T) \longrightarrow T
\]
\[
\{x_1,...,x_n\}\mapsto \prod_{i=1}^nx_i
\]
gives a fiber bundle with fiber $C_n^e(T)$.

So we have a short exact sequence:
\[
1 \longrightarrow \pi_1(C^e_n(T)) \longrightarrow B_n(T) \xrightarrow{m_*} \pi_1(T) \longrightarrow 1.
\]
\end{lemma}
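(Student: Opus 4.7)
I would prove $m$ is a fiber bundle by exploiting the diagonal translation action of $T$ on $C_n(T)$, namely $t\cdot\{x_1,\ldots,x_n\}=\{tx_1,\ldots,tx_n\}$. Since $T$ is abelian, $m(t\cdot X)=t^n\,m(X)$, so $m$ is equivariant with respect to the $n$-th power map $\nu\colon T\to T$, $t\mapsto t^n$. The observation driving the argument is that $\nu$ is a surjective Lie group homomorphism with discrete kernel $(\mathbb{Z}/n)^2$, hence a covering map, and in particular admits continuous local sections.

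Given any $y_0\in T$, I would pick an open neighborhood $U$ of $y_0$ together with a continuous section $s\colon U\to T$ of $\nu$, so that $s(y)^n=y$ for all $y\in U$, and define
\[
\phi_U\colon U\times C_n^e(T)\longrightarrow m^{-1}(U),\qquad (y,X)\mapsto s(y)\cdot X,
\]
with candidate inverse $Z\mapsto\bigl(m(Z),\,s(m(Z))^{-1}\cdot Z\bigr)$; well-definedness of the inverse follows from $m(s(m(Z))^{-1}\cdot Z)=s(m(Z))^{-n}\,m(Z)=e$. Continuity of both maps is evident and they commute with projection to $U$, so $\phi_U$ is the desired local trivialization with fiber $C_n^e(T)$, establishing the bundle structure.

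For the short exact sequence I would apply the long exact sequence of homotopy groups of this fibration. The torus is aspherical, so $\pi_2(T)=0$, and $m_*\colon\pi_1(C_n(T))\to\pi_1(T)$ is surjective: for any loop $\gamma$ in $T$, one obtains a loop in $C_n(T)$ whose $m_*$-image is $[\gamma]$ by moving the point $x_1$ along a small perturbation of $\gamma$ (kept disjoint from $x_2,\ldots,x_n$) while holding the other points fixed. The fragment
\[
\pi_2(T)\to\pi_1(C_n^e(T))\to\pi_1(C_n(T))\xrightarrow{m_*}\pi_1(T)\to\pi_0(C_n^e(T))
\]
therefore truncates to the claimed short exact sequence, and as a byproduct $C_n^e(T)$ is path-connected. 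The only mildly delicate ingredient in the whole argument is the existence of the local section $s$ of $\nu$; once one recognises $\nu$ as a covering homomorphism of Lie groups this is immediate, and the remainder is a routine application of fibration machinery together with the standard device for producing loops in configuration spaces.
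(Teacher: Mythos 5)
Your proof is correct and follows essentially the same approach as the paper: the paper's explicit homeomorphism $U\times C_n^e(T)\to m^{-1}(U)$ over a neighborhood of $e$, given by translating each point by $[(x/n,y/n)]$, is exactly your local section $s$ of the $n$-th power map $\nu$, specialized to the basepoint. Your version is somewhat more complete, since you trivialize over every point of $T$ rather than only near $e$ and you explicitly verify $\pi_2(T)=0$ and the surjectivity of $m_*$, which the paper leaves implicit in the phrase ``the short exact sequence follows from this fibration.''
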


\begin{proof}
It suffices to show for a small neighborhood $U$ of $e$ it holds $U\times C_n^e(T)\cong m^{-1}(U)$. Since $T=S^1\times S^1$, if we take $n$-th roots of unity, there are $n^2$ of them. If we write the torus as $\mathbb{R}^2/\mathbb{Z}^2$, then $e=[(0,0)]$. Let $U=\{[(x,y)]\in \mathbb{R}^2/\mathbb{Z}^2: |x|<\frac{1}{2}, |y|<\frac{1}{2} \}$, we can define the homeomorphism:
\[
U\times C^e_n(T)\longrightarrow m^{-1}(U)
\]
\[
([(x,y)], \{[(x_1,y_1)],...,[(x_n,y_n)]\})\mapsto \{[(x_1+\frac{x}{n},y_1+\frac{y}{n})],...,[(x_n+\frac{x}{n},y_n+\frac{y}{n})]\}.
\]
In the above formula we choose $(x,y)$ representing $[(x,y)]$ such that $|x|<\frac{1}{2}$, $|y|<\frac{1}{2}$

The short exact sequence follows from this fibration.

\end{proof}

There is another combinatorial description of the map $m_*$, making use of the group representation of $B_n(T)$ in \cite{presentation}: 

Generators: 
\[
a,b, \sigma_1,...,\sigma_{n-1}
\]

Relations:

\begin{align*}
\sigma_i\sigma_j &=\sigma_j\sigma_i, (1\leq i\neq j\leq n-1)\\
\sigma_i\sigma_{i+1}\sigma_i &=\sigma_{i+1}\sigma_i\sigma_{i+1}, (1\leq i \leq n-1)\\
a\sigma_i&=\sigma_ia,(2\leq i\leq n-1)\\
b\sigma_i&=\sigma_ib,(2\leq i\leq n-1)\\
\sigma_1^{-1}a\sigma_1^{-1}a &=a\sigma_1^{-1}a\sigma_1^{-1}\\
\sigma_1^{-1}b\sigma_1^{-1}b &=b\sigma_1^{-1}b\sigma_1^{-1}\\
\sigma_1^{-1}a\sigma_1^{-1}b &=b\sigma_1^{-1}a\sigma_1\\
ab^{-1}a^{-1}b &=(\sigma_1\sigma_2...\sigma_{n-1})(\sigma_{n-1}...\sigma_2\sigma_1)
\end{align*}

Define the group homomorphism $\phi: B_n(T)\longrightarrow \mathbb{Z}\times\mathbb{Z}$ by $\phi(a)=(1,0)$, $\phi(b)=(0,1)$ and $\phi(\sigma_i)=(0,0)$. It is not hart to see this is a well-defined homomorphism by studying the relations above. 

If we identify $H_1(T;\mathbb{Z})=\mathbb{Z}\times\mathbb{Z}$ by $(1,0)$, $(0,1)$ representing longitude and meridean $1$-cycles respectively, we then can interpret $m_*$ in the following way:

\begin{lemma}
The map $m_*$ is the same as the map $\phi$. 
\end{lemma}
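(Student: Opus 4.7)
The plan is to verify equality on generators. Both $m_*$ and $\phi$ are group homomorphisms from $B_n(T)$ to $\pi_1(T) \cong \mathbb{Z} \times \mathbb{Z}$, so it suffices to show that $m_*(a) = \phi(a) = (1,0)$, $m_*(b) = \phi(b) = (0,1)$, and $m_*(\sigma_i) = \phi(\sigma_i) = (0,0)$ for each $i$.

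The claim for $\sigma_i$ is essentially already proved in Lemma \ref{subgroup}: the representative of $\sigma_i$ constructed there is a loop lying entirely in $C_n^e(T)$, so its image under $m$ is the constant loop at $e$, hence $m_*(\sigma_i) = 0$. Alternatively, any loop representing $\sigma_i$ can be realized by moving only two strands locally within a small disk on $T$, via a half-twist; since the two strands involved are exchanged, their unordered pair returns to itself, and the contribution of the other strands is constant, so the product $\prod x_i(t)$ returns to $e$ along a nullhomotopic loop in $T$.

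For $a$ and $b$, I would appeal to Bellingeri's geometric description of the generators in \cite{presentation}: $a$ (resp.\ $b$) is represented by a loop in $C_n(T)$ in which the first strand $x_1(t)$ traverses a longitude (resp.\ meridian) of $T$ once, while the remaining strands $x_2, \ldots, x_n$ stay at their base positions. Under $m$, this loop maps to $t \mapsto x_1(t) \cdot x_2^0 \cdots x_n^0$, which is a translate of the longitude (resp.\ meridian) loop in $T$. Under the identification $H_1(T;\mathbb{Z}) = \mathbb{Z} \times \mathbb{Z}$ using the longitude and meridian, this gives $m_*(a) = (1,0)$ and $m_*(b) = (0,1)$, matching $\phi$.

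The main obstacle is making the geometric identification of $a$ and $b$ precise and consistent with Bellingeri's conventions — one must ensure the representatives chosen for $a$ and $b$ in $C_n(T)$ are exactly the loops where a single strand traverses a standard longitude or meridian, and that the orientations chosen match the isomorphism $H_1(T;\mathbb{Z}) \cong \mathbb{Z}^2$ used in the statement. Once these conventions are pinned down, the computation of $m_*$ on $a$, $b$, and $\sigma_i$ is immediate, and equality with $\phi$ follows from the universal property of the presentation.
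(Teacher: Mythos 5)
Your proposal is correct and follows essentially the same route as the paper, which simply checks $m_*$ against $\phi$ on the generators $a$, $b$, $\sigma_i$ and declares the computation clear from the definition of $m$ (the paper even hedges on the same orientation/convention ambiguity for $a$ and $b$ that you flag). Your version just spells out the geometric verification in more detail.
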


\begin{proof}
By the definition of the map $m$, it is clear that $m_*(a)=(1,0),m_*(b)=(0,1)$ (or $m_*(a)=(0,1),m_*(b)=(1,0)$) and $m_*(\sigma_i)=(0,0)$.
\end{proof}

A remark is that it follows easily from the group presentation given in \cite{presentation} that the quotient of $B_n(S)$ by $B_n(D)^{B_n(S)}$ is isomorphic to $H_1(S;\mathbb{Z})$ for a closed orientable surface $S$ with positive genus.

\begin{lemma} 

The following diagram commutes and each of the horizontal lines is a short exact sequence, where $(P_n(T))'=[P_n(T),P_n(T)]$ and $H=(P_n(T))'B_n(D)=\{xy\in B_n(T)| x\in (P_n(T))', y\in B_n(D)\}$. The first two columns of vertical lines are all inclusion maps: 

\[
\begin{tikzcd}
1\arrow{r} & P_n(D)\arrow[hook]{r}\arrow[hook]{d} & B_n(D)\arrow{r}\arrow[hook]{d} & S_n\arrow{r}\arrow{d}{=} & 1\\
1\arrow{r} & (P_n(T))'\arrow[hook]{r}\arrow[hook]{d} & H\arrow{r}\arrow[hook]{d} & S_n\arrow{r}\arrow{d}{=} & 1\\
1\arrow{r} & \pi_1({C'}_n^e(T))\arrow[hook]{r}\arrow[hook]{d} & \pi_1 (C_n^e(T))\arrow{r}\arrow[hook]{d} & S_n\arrow{r}\arrow{d}{=} & 1\\
1\arrow{r} & P_n(T)\arrow[hook]{r} & B_n(T)\arrow{r} & S_n\arrow{r} & 1\\
\end{tikzcd}
\]
\end{lemma}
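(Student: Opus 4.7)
The plan is to verify the four rows one at a time, then the commutativity, since most of the non-trivial content is already packaged in earlier results. The first and fourth rows are the standard short exact sequences arising from the regular $S_n$-covering $C'_n(S)\to C_n(S)$ for $S=D$ and $S=T$, as recalled in Section \ref{definition}. The third row is obtained by restricting this covering: since the condition $\prod x_i=e$ is symmetric in the coordinates, the preimage of $C^e_n(T)$ under $C'_n(T)\to C_n(T)$ is exactly $C'^e_n(T)$, and the restricted map $C'^e_n(T)\to C^e_n(T)$ is therefore itself a regular $S_n$-covering. Applying $\pi_1$ gives the third row.

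For the second row, the first step is to observe that $(P_n(T))'=[P_n(T),P_n(T)]$ is a characteristic subgroup of $P_n(T)$, hence normal in $B_n(T)$ since $P_n(T)\trianglelefteq B_n(T)$. Consequently the product $H=(P_n(T))'\cdot B_n(D)$ is a subgroup of $B_n(T)$, with $(P_n(T))'$ normal in it. The nontrivial computation is
\[
H\cap P_n(T)=(P_n(T))'.
\]
The inclusion $\supseteq$ is immediate. For $\subseteq$, take $xy\in H\cap P_n(T)$ with $x\in(P_n(T))'$, $y\in B_n(D)$; then $y\in P_n(T)\cap B_n(D)=P_n(D)$, and here the result of Goldberg cited in the introduction, $P_n(D)^{P_n(T)}=(P_n(T))'$, ensures $y\in(P_n(T))'$, whence $xy\in(P_n(T))'$. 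Combining this with the second isomorphism theorem gives
\[
H/(P_n(T))'\;\cong\;B_n(D)\big/\bigl((P_n(T))'\cap B_n(D)\bigr)\;=\;B_n(D)/P_n(D)\;\cong\;S_n,
\]
which establishes the second row.

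For the vertical inclusions, the leftmost column uses: $P_n(D)\subseteq(P_n(T))'$ by Goldberg; $(P_n(T))'\subseteq\pi_1({C'}_n^e(T))$ because $m_*$ restricted to $P_n(T)$ factors through the abelianization (so the commutator subgroup lies in its kernel, which by Lemma \ref{bundle} restricted to the pure part equals $\pi_1({C'}_n^e(T))$); and $\pi_1({C'}_n^e(T))\subseteq P_n(T)$ coming from the inclusion $C'^e_n(T)\hookrightarrow C'_n(T)$ (this map is $\pi_1$-injective since the analogous fiber bundle argument works in the ordered setting). The middle column is analogous: $B_n(D)\subseteq H$ tautologically, $H\subseteq\pi_1(C_n^e(T))$ because both factors of $H$ lie in $\pi_1(C_n^e(T))$ (the first by the argument just given, the second by Lemma \ref{subgroup}), and $\pi_1(C_n^e(T))\subseteq B_n(T)$ by Lemma \ref{bundle}. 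Commutativity of each square reduces to the fact that all vertical arrows are inclusions and the horizontal arrows in each row are the restriction of the same quotient $B_n(T)\to S_n$ (respectively its restriction to pure-braid groups).

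The only step that is not purely formal is the identification $H\cap P_n(T)=(P_n(T))'$, and this is exactly where Goldberg's theorem is consumed; I would expect this to be the main obstacle to anyone reading the diagram without the inclusion $P_n(D)\subseteq(P_n(T))'$ in hand. Once that identification is in place, the rest of the lemma is a diagram-chase.
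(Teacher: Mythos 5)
Your proof is correct and follows essentially the same route as the paper: the substance of the lemma is concentrated in the exactness of the second row and in the vertical inclusions, and both reduce, exactly as you argue, to Goldberg's identity $P_n(D)^{P_n(T)}=(P_n(T))'$ (equivalently $H\cap P_n(T)=(P_n(T))'$ via $B_n(D)\cap P_n(T)=P_n(D)\subseteq (P_n(T))'$). The only harmless divergence is that you obtain $(P_n(T))'\subseteq\pi_1({C'}_n^e(T))$ from the abelianness of the target of $m_*$, whereas the paper invokes the normal-closure characterization a second time; both are one-line arguments.
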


\begin{proof}
We first verify all the vertical maps are inclusions. We have shown that $P_n(D)\subseteq \pi_1({C'}_n^e(T))$, and by \cite{goldberg}, $(P_n(T))'$ is the normal closure of $P_n(D)$ in $P_n(T)$, so $P_n(T)'\subseteq \pi_1({C'}_n^e(T))$. 

For the other column, we need to verify $H=(P_n(T))'B_n(D)\subseteq \pi_1(C')^e_n(T)$. This follows easily from $(P_n(T))'\subseteq \pi_1({C'}_n^e(T)) \subseteq \pi_1 (C^e_n(T))$ and $B_n(D)\subseteq \pi_1 (C^e_n(T))$. 

The short exact sequences are clear, except for the second row. The image of $H=(P_n(T))'B_n(D)\longrightarrow S_n$ is determined by the $B_n(D)$ part, since $(P_n(T))'$ are pure braids. So the kernel of this map should be $(P_n(T))'P_n(D)=(P_n(T))'$, because $P_n(D)\subseteq (P_n(T))'$. 

That the diagram is commutative is because the vertical lines and the second horizontal map on each row are all inclusions. 
\end{proof}

Similar to the notation $(P_n(T))'=[P_n(T),P_n(T)]$, we will write $(B_n(T))'=[B_n(T),B_n(T)]$.

\begin{lemma}
$B_n(T)/(B_n(T))'\cong \mathbb{Z}\times \mathbb{Z}\times \mathbb{Z}_2$.
\end{lemma}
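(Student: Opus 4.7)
The plan is to compute the abelianization directly from the presentation of $B_n(T)$ recalled just above the statement (from \cite{presentation}). Write $\overline{a}$, $\overline{b}$, $\overline{\sigma}_i$ for the images of $a$, $b$, $\sigma_i$ in $B_n(T)/(B_n(T))'$, and read each relation additively, since the quotient is abelian.

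The first two families of relations involve commutators only, so they become trivial in the abelianization. The braid relations $\sigma_i\sigma_{i+1}\sigma_i=\sigma_{i+1}\sigma_i\sigma_{i+1}$ collapse to $\overline{\sigma}_i=\overline{\sigma}_{i+1}$, so all $\overline{\sigma}_i$ coincide with a single element $\overline{\sigma}$. The commutation relations $a\sigma_i=\sigma_i a$ and $b\sigma_i=\sigma_i b$ (for $i\geq 2$) are automatic in the abelianization. Likewise, the two relations $\sigma_1^{-1}a\sigma_1^{-1}a=a\sigma_1^{-1}a\sigma_1^{-1}$ and $\sigma_1^{-1}b\sigma_1^{-1}b=b\sigma_1^{-1}b\sigma_1^{-1}$ are commutator identities and therefore trivial abelianized.

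The key relation is $\sigma_1^{-1}a\sigma_1^{-1}b=b\sigma_1^{-1}a\sigma_1$: abelianized it reads $-\overline{\sigma}+\overline{a}-\overline{\sigma}+\overline{b}=\overline{b}-\overline{\sigma}+\overline{a}+\overline{\sigma}$, which simplifies to $2\overline{\sigma}=0$. The last relation $ab^{-1}a^{-1}b=(\sigma_1\cdots\sigma_{n-1})(\sigma_{n-1}\cdots\sigma_1)$ has abelianization $0=2(n-1)\overline{\sigma}$, which is automatic given $2\overline{\sigma}=0$. Thus the abelianization admits the presentation $\langle \overline{a},\overline{b},\overline{\sigma}\mid 2\overline{\sigma}=0\rangle$, i.e., $\mathbb{Z}\oplus\mathbb{Z}\oplus\mathbb{Z}_2$.

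To conclude rigorously one must also check that $\overline{\sigma}$ genuinely has order two (not one), i.e., that the relation does not force $\overline{\sigma}=0$. This can be verified by exhibiting a surjection $B_n(T)\to\mathbb{Z}\times\mathbb{Z}\times\mathbb{Z}_2$ sending $a\mapsto(1,0,0)$, $b\mapsto(0,1,0)$, $\sigma_i\mapsto(0,0,1)$; a direct check on each defining relation shows it is well defined, and this gives the reverse inequality. The step requiring the most care is this verification that $\overline{\sigma}\neq 0$, since without it one cannot rule out further collapse of the $\mathbb{Z}_2$ factor; all other steps are routine commutator bookkeeping.
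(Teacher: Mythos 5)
Your proposal is correct and follows essentially the same route as the paper: both abelianize the Bellingeri presentation of $B_n(T)$, observe that the only nontrivial surviving relation is $2\overline{\sigma}=0$ (coming from $\sigma_1^{-1}a\sigma_1^{-1}b=b\sigma_1^{-1}a\sigma_1$), and identify the resulting abelian group as $\mathbb{Z}\times\mathbb{Z}\times\mathbb{Z}_2$. Your extra verification that $\overline{\sigma}$ has order exactly two is a welcome detail the paper leaves implicit, but it does not change the method.
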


\begin{proof}
By adding the relations $ab=ba$, $a\sigma_i=\sigma_ia$, $b\sigma_i=\sigma_ib$ ($1\leq i\leq n-1$) and $\sigma_i\sigma_j=\sigma_j\sigma_i$ ($1\leq i< j\leq n-1$) to the presentation of $B_n(T)$ in \cite{presentation} and doing reductions on the relations, we obtain a group presentation for $B_n(T)/(B_n(T))'$:
\[
<a,b,\sigma|ab=ba,a\sigma=\sigma a,b\sigma=\sigma b,\sigma^2=1>
\]
which coincides with the group presentation for $\mathbb{Z}\times \mathbb{Z}\times \mathbb{Z}_2$.
\end{proof}

\begin{lemma}\label{commutator}
$\pi_1 (C_n^e(T))=(B_n(T))'B_n(D)$.
\end{lemma}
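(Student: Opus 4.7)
The plan is to use the short exact sequence from Lemma \ref{bundle}, which identifies $\pi_1(C_n^e(T))$ with $\ker(m_*)$ where $m_*:B_n(T)\to\pi_1(T)=\mathbb{Z}\times\mathbb{Z}$, and then show this kernel coincides with $(B_n(T))'B_n(D)$ by comparing it against the abelianization $B_n(T)/(B_n(T))'\cong\mathbb{Z}\times\mathbb{Z}\times\mathbb{Z}_2$ computed in the previous lemma.

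First I would establish the inclusion $(B_n(T))'B_n(D)\subseteq\pi_1(C_n^e(T))$. Since the target $\pi_1(T)$ of $m_*$ is abelian, we have $(B_n(T))'\subseteq\ker(m_*)$. From the combinatorial identification $m_*=\phi$ in the preceding lemma, each Artin generator satisfies $m_*(\sigma_i)=(0,0)$, so $B_n(D)\subseteq\ker(m_*)$ (this is also Lemma \ref{subgroup}). Because $(B_n(T))'$ is normal in $B_n(T)$, the product $(B_n(T))'B_n(D)$ is a genuine subgroup and is contained in $\pi_1(C_n^e(T))$.

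For the reverse inclusion I would pass to the quotient $Q=B_n(T)/\bigl((B_n(T))'B_n(D)\bigr)$. This is a further quotient of the abelianization $B_n(T)/(B_n(T))'\cong\mathbb{Z}\times\mathbb{Z}\times\mathbb{Z}_2$, whose three factors are generated by $\bar a,\bar b,\bar\sigma$. All the $\sigma_i$ are pairwise conjugate in $B_n(D)$, so they collapse to the single class $\bar\sigma$ in the abelianization; hence the image of $B_n(D)$ there is exactly the $\mathbb{Z}_2$ factor. Killing that factor yields $Q\cong\mathbb{Z}\times\mathbb{Z}$ generated by $\bar a,\bar b$. Since $(B_n(T))'B_n(D)\subseteq\ker(m_*)$, the map $m_*$ descends to a homomorphism $\bar m_*:Q\to\mathbb{Z}\times\mathbb{Z}$ with $\bar a\mapsto(1,0)$ and $\bar b\mapsto(0,1)$, so $\bar m_*$ is an isomorphism. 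An isomorphism on $Q$ forces $\ker(m_*)$ to equal the kernel of the quotient map $B_n(T)\to Q$, giving $(B_n(T))'B_n(D)=\ker(m_*)=\pi_1(C_n^e(T))$.

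The one step that needs explicit care is the structural claim that the image of $B_n(D)$ inside $B_n(T)/(B_n(T))'$ is exactly the $\mathbb{Z}_2$ summand: it contributes nothing to the $\mathbb{Z}\times\mathbb{Z}$ part because $m_*(\sigma_i)=0$, and it hits the full $\mathbb{Z}_2$ because $\bar\sigma$ itself is the common image of every $\sigma_i$ under abelianization, with $\bar\sigma^2=1$ from the previous lemma. Once that bookkeeping is in place, the argument is a clean kernel-versus-quotient comparison and no further obstacle remains.
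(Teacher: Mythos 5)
Your proof is correct and follows essentially the same route as the paper: both establish $(B_n(T))'B_n(D)\subseteq\ker(m_*)$ identically and then use the abelianization $B_n(T)/(B_n(T))'\cong\mathbb{Z}\times\mathbb{Z}\times\mathbb{Z}_2$ to force the reverse inclusion. The only difference is packaging --- the paper applies the Third Isomorphism Theorem to get $\pi_1(C_n^e(T))/(B_n(T))'\cong\mathbb{Z}_2$ and observes that $\sigma_1$ represents the nontrivial coset, whereas you quotient by the product subgroup and check that $m_*$ descends to an isomorphism on $\mathbb{Z}\times\mathbb{Z}$; these are two phrasings of the same index count.
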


\begin{proof}
The following diagram with two short exact sequences commutes:
\[
\begin{tikzcd}
{}& & 1\arrow{d} & & \\
  & & \pi_1(C_n^e(T))\arrow{d} & &\\
 1\arrow{r} & (B_n(T))'\arrow{r} & B_n(T)\arrow{r}\arrow{d}{m_*} & \mathbb{Z}\times \mathbb{Z}\times\mathbb{Z}_2\arrow{r}\arrow{dl} & 1 \\
  & & \mathbb{Z}\times\mathbb{Z}\arrow{d} & & \\
  & &  1 & & \\
\end{tikzcd}
\]

The horizontal short exact sequence describes the abelianization of $B_n(T)$. 

So we see that $(B_n(T))'$ is a subgroup of $\pi_1 (C_n^e(T))$, and also $B_n(D)$ is a subgroup of $\pi_1 (C_n^e(T))$, we get $(B_n(T))'B_n(D)\subseteq \pi_1 (C_n^e(T))$.

Apply the Third Isomorphism Theorem to $(B_n(T))' \trianglelefteq \pi_1(C_n^e(T)) \trianglelefteq B_n(T)$, we see:
\[
\pi_1(C_n^e(T))/(B_n(T))' \cong \mathbb{Z}_2
\]
Since $\sigma_1\in \pi_1(C_n^e(T))$ and $\sigma_1\notin (B_n(T))'$, $\sigma_1$ will represent the nonzero coset in the above quotient, so:
\[
(B_n(T))'B_n(D)\subseteq \pi_1(C_n^e(T))= (B_n(T))' \sqcup (B_n(T))'\sigma_1 \subseteq (B_n(T))'B_n(D).
\] 
\end{proof}

\section{Proof of the Main Theorem}

\begin{proof}
First, it is easy to see $\pi_1(C^e_n(T))\supseteq B_n(D)^{B_n(T)}$ since $\pi_1(C^e_n(T))=\ker(m_*)$, and $B_n(D)\subseteq \ker(m_*)$, so $\pi_1(C^e_n(T))$ is a normal subgroup of $B_n(T)$ containing $B_n(D)$, it follows $\pi_1(C^e_n(T))\supseteq B_n(D)^{B_n(T)}$.

On the other hand, we need to show $\pi_1(C^e_n(T))\subseteq B_n(D)^{B_n(T)}$. Since by Lemma \ref{commutator}, $\pi_1(C^e_n(T))= (B_n(T))'B_n(D)$, it suffices to show $(B_n(T))'\subseteq B_n(D)^{B_n(T)}$.

Consider the image of an element of form $\alpha\beta \alpha^{-1}\beta^{-1}\in (B_n(T))'$, where $\alpha,\beta\in B_n(T)$ under the quotient map $\pi_1(C^e_n(T))\xrightarrow{q} \pi_1(C^e_n(T))/B_n(D)^{B_n(T)}$. Since $B_n(T)=P_n(T)\cdot B_n(D)$, we can write $\alpha=p_1t_1$, $\beta=p_2t_2$, where $p_1,p_2\in P_n(T)$ and $t_1,t_2\in B_n(D)$. So $\alpha\beta\alpha^{-1}\beta^{-1}=p_1t_1p_2t_2t_1^{-1}p_1^{-1}t_2^{-1}p_2^{-1}$, and the fact $B_n\subseteq B_n(D)^{B_n(T)}$ implies $q(\alpha\beta\alpha^{-1}\beta^{-1})=q(p_1p_2p_1^{-1}p_2^{-1})\in \pi_1(C^e_n(T))/B_n(D)^{B_n(T)}$. 

But by \cite{goldberg}, $p_1p_2p_1^{-1}p_2^{-1}\in (P_n(T))'=P_n(D)^{P_n(T)}\subseteq B_n(D)^{B_n(T)}$, it follows $q(\alpha\beta\alpha^{-1}\beta^{-1})$ is the identity element in the quotient group $\pi_1(C^e_n(T))/B_n(D)^{B_n(T)}$, which means $\alpha\beta \alpha^{-1}\beta^{-1}\in B_n(D)^{B_n(T)}$. We can conclude $(B_n(T))'\subseteq B_n(D)^{B_n(T)}$, which implies $\pi_1(C^e_n(T))\subseteq B_n(D)^{B_n(T)}$.

\end{proof}

\end{document}